\documentclass[12pt, reqno]{amsart}
\usepackage[margin=0.75in]{geometry}
\usepackage[
  bookmarks=true]{hyperref}
\usepackage[OT2, T1]{fontenc}
\usepackage[english]{babel}
\usepackage[utf8]{inputenc}
\usepackage{csquotes}
\usepackage[final]{microtype}
\usepackage{lmodern}
\usepackage{amsthm}
\usepackage{amssymb}
\usepackage{mathrsfs}
\usepackage{enumerate}
\usepackage{tikz-cd} 
\usepackage{tikz}
\usepackage{multicol}
\usepackage{multirow}
\usepackage{tikz-qtree}
\usepackage{rotating}
\usepackage{graphicx}
\usepackage{comment}
\usepackage{enumitem}
\usepackage{manfnt}
\usepackage{mathtools}
\usepackage{diagbox}
\usepackage{bbm}
\usetikzlibrary{arrows,calc,matrix,trees,arrows.meta,positioning,decorations.pathreplacing,bending}

\newtheorem{theorem}{Theorem}[section]
\newtheorem*{theorem*}{Theorem}
\newtheorem*{goal*}{Goal}
\newtheorem*{question*}{Question}

\newtheorem{lemma}[theorem]{Lemma}

\theoremstyle{definition}

\newtheorem{remark}[theorem]{Remark}
\newtheorem{definition}[theorem]{Definition}

\newtheorem{condition}[theorem]{Condition}






\newcommand{\genlegendre}[4]{%
  \genfrac{(}{)}{}{#1}{#3}{#4}%
  \if\relax\detokenize{#2}\relax\else_{\!#2}\fi
}


\DeclareSymbolFont{cyrletters}{OT2}{wncyr}{m}{n}
\DeclareMathSymbol{\Sha}{\mathalpha}{cyrletters}{"58}

\setcounter{tocdepth}{2}

\usepackage{graphicx}

\title{Large Deviation Principles for Abelian Monoids}
\author{Daniel Keliher}
\address{Daniel Keliher, Massachusetts Institute of Technology, Concourse Program, Cambridge, MA 02139, USA}
\email{keliher@mit.edu}
\urladdr{\url{https://www.danielkeliher.com/}}

\author{Sun Woo Park}
\address{Sun Woo Park, Max Planck Institute for Mathematics,  Vivatsgasse 7, 53111 Bonn, Germany}
\email{s.park@mpim-bonn.mpg.de}
\urladdr{\url{https://sites.google.com/wisc.edu/spark483}}

\date{\today }

\begin{document}

\begin{abstract}
    Following work of Mehrdad and Zhu \cite{MZ} and of Liu \cite{Liu-ErdosKac, Liu-Turan}, we prove a large deviation principle for a broad class of integer-valued additive functions defined over abelian monoids. As a corollary, we obtain a large deviation principle for a generalized form of the Erd{\H{o}s}-Kac theorem due to Liu. 
\end{abstract}

\maketitle

\section{Introduction}

The celebrated Erd\H{o}s-Kac Theorem  \cite{EK40} says that if $\omega(n)$ is the number of distinct prime factors of a positive integer $n$, then 
\begin{equation} \label{eq:EK}
    \frac{\omega(n)-\log \log n}{\sqrt{\log \log n}}
\end{equation}
is normally distributed with mean $0$ and standard deviation 1 (cf. Theorem \ref{thm:LiuEK}).

Much work has been done on the rate at which \eqref{eq:EK} converges to $N(0,1)$, see  \cite{RT58} for one such example. Likewise, one can study the tails of the distribution as the upper bound for $n$ grows.  Indeed, in \cite{MZ}, Mehrdad and Zhu prove a large deviation principle for a large class of ``strongly additive'' functions $g : \mathbb{N} \to \mathbb{N}$, including when $g$ is the number-of-distinct-prime-divisors function, $\omega$, thus giving a large deviation principle for the distributions of \eqref{eq:EK} over $n \leq X$ as $X \to \infty$. 

In a different direction, Liu proves a generalization of the Erd{\H{o}}s-Kac theorem for classes of abelian monoids $\mathcal{M}$ generated by a set $\mathcal{P}$ \cite{Liu-ErdosKac}. This general framework includes, for example, the cases where $\mathcal{P}$ is the set of rational primes, the set of irreducible monic polynomials over a finite field, the set of effective 0-cycles of a nice variety over a finite field,  and  the case where $\mathcal{P}$ is a Beurling system of generalized primes. See  \cite{B37} or \cite{BeurlingBook} for background on the latter. 

We will work in this same general setting, which we now describe.  Let $\mathcal{P}$ be a set together with a ``norm'' map ${\mathbf{N}: \mathcal{P} \to \mathbb{N}\setminus\{1\}.}$ Let $\mathcal{M}$ be the multiplicative monoid generated by the elements of $\mathcal{P}$. That is, every element $m \in \mathcal{M}$ can be expressed as $m = \prod_{p \in \mathcal{P}}p^{e_p}$ where almost all of the $e_p \in \mathbb{Z}_{\geq 0}$ are zero.   Extend $\mathbf{N}$ to a monoid morphism $\mathcal{M} \to \mathbb{N}$ by taking, for any $m \in \mathcal{M}$,  ${\mathbf{N}(m) = \prod_{p \in \mathcal{P}}\mathbf{N}(p)^{e_P}.}$

Throughout, we make the following two assumptions on the growth rate of $\mathcal{M}$ and $\mathcal{P}$ with respect to $\mathbf{N}$. 

\begin{condition}  \label{condition:Beurling}
    For $\mathcal{P}, \mathcal{M}, \mathbf{N}$ as above, assume
    \begin{enumerate}
    \item ${\displaystyle \sum_{\substack{ m \in \mathcal{M} \\ \mathbf{N}(m) \leq X}} 1 = a X + O(X^b)}$ for some $a > 0$ and $b \in [0,1)$, and  
    \item  ${\displaystyle \sum_{\substack{ p \in \mathcal{P} \\ \mathbf{N}(p) \leq X}} 1 = O\left( \frac{X}{\log X}\right)}$.
\end{enumerate}
\end{condition}

In this setting, Liu proves the following generalization of the Erd{\H{o}}s-Kac Theorem. 
 
\begin{theorem}[Theorem 1, \cite{Liu-ErdosKac}]\label{thm:LiuEK}
    If $\mathcal{M}, \mathcal{P}, \mathbf{N}$ are as above and satisfy Condition \ref{condition:Beurling}, then
    \begin{equation}\label{eq:LiuEK}
        \lim_{X \to \infty} \frac{\#\left\{m : \mathbf{N}(m) \leq X, \left| \frac{\omega(m) - \log \log \mathbf{N}(m)}{\sqrt{\log \log \mathbf{N}(m)}} \right| \leq \gamma \right\}}{\#\{m : \mathbf{N}(m) \leq X\}} = \int_{-\infty}^\gamma  e^{-t^2/2}dt.
    \end{equation}
\end{theorem}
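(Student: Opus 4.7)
My plan is to establish \eqref{eq:LiuEK} by the method of moments. Specifically, for each integer $k \geq 0$ I would show that the $k$-th moment of $(\omega(m) - \log\log \mathbf{N}(m))/\sqrt{\log\log \mathbf{N}(m)}$, averaged uniformly over $\{m \in \mathcal{M} : \mathbf{N}(m) \leq X\}$, converges as $X \to \infty$ to the $k$-th moment of a standard normal. Since the Gaussian is moment-determined, this implies weak convergence and hence the statement.

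The first key input is a sieve-type count that follows directly from Condition \ref{condition:Beurling}(1) and unique factorization in $\mathcal{M}$: for any squarefree $d = p_1 \cdots p_r \in \mathcal{M}$, writing a multiple of $d$ as $m = d m'$ with $m' \in \mathcal{M}$ yields
\[
\#\{m \in \mathcal{M} : \mathbf{N}(m) \leq X, \, d \mid m\} = \frac{aX}{\mathbf{N}(d)} + O\bigl((X/\mathbf{N}(d))^b\bigr).
\]
The second input is a Mertens-type asymptotic $\sum_{\mathbf{N}(p) \leq Y} 1/\mathbf{N}(p) = \log\log Y + O(1)$. Condition \ref{condition:Beurling}(2) with partial summation yields only the upper bound $O(\log\log Y)$, so the main term has to be extracted by analyzing the Dirichlet series $F(s) = \sum_{m \in \mathcal{M}} \mathbf{N}(m)^{-s}$: Condition \ref{condition:Beurling}(1) gives $F(s)$ meromorphic on $\mathrm{Re}(s) > b$ with a simple pole at $s = 1$ of residue $a$, so that $\log F(s) = \sum_{p \in \mathcal{P}} \mathbf{N}(p)^{-s} + O(1)$ behaves like $\log(1/(s-1))$, and a Tauberian argument delivers the claimed asymptotic.

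With both ingredients in hand, I would truncate to $\omega_Y(m) := \#\{p \in \mathcal{P} : p \mid m, \, \mathbf{N}(p) \leq Y\}$ with $Y = X^{1/(2k)}$, expand $\omega_Y(m)^k$ as a sum over $k$-tuples of primes, and apply the sieve count term by term. The leading contribution becomes an iterated sum of $1/\mathbf{N}(p)$, giving moments $(\log\log Y)^k(1 + o(1)) = (\log\log X)^k(1 + o(1))$, with the standard combinatorial reduction producing the Gaussian moments $(2j-1)!!$ for $k = 2j$ and $0$ for odd $k$. A routine first-moment bound then controls $\omega(m) - \omega_Y(m)$ and allows $\log\log \mathbf{N}(m)$ to be replaced by $\log\log X$ for all but $o(X)$ values of $m$. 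The main obstacle, in my view, is the Mertens estimate: Condition \ref{condition:Beurling}(2) alone only gives an upper bound on the prime count, so extracting the main term $\log\log Y$ requires the Beurling-style analytic input on $F(s)$; once this is in hand, the remainder of the argument is a fairly routine combinatorial application of the method of moments.
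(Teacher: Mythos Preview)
The paper does not contain a proof of Theorem~\ref{thm:LiuEK}: it is quoted verbatim from Liu's paper \cite{Liu-ErdosKac} as background and motivation, and the present paper's contribution is the large deviation principle of Theorem~\ref{thm:Main-MonoidMZ}. So there is no in-paper argument to compare your proposal against.

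That said, your sketch is sound and is essentially the standard modern route to Erd\H{o}s--Kac theorems (the Granville--Soundararajan moment method). Your identification of the two key inputs is correct. The divisor count you write down is precisely what Condition~\ref{condition:Beurling}(1) together with unique factorisation in $\mathcal{M}$ gives, and you are right that Condition~\ref{condition:Beurling}(2) by itself only bounds the prime count from above: extracting the main term in the Mertens sum genuinely requires the Beurling-zeta input you describe. This is exactly the content of Lemma~\ref{lemma:Liu-Turan}, which the present paper imports from Liu's companion paper \cite{Liu-Turan} and which is proved there via the analytic argument you outline. Once that lemma is available, your truncation at $Y=X^{1/(2k)}$, the multinomial expansion of $\omega_Y(m)^k$, and the combinatorial reduction to the Gaussian moments $(2j-1)!!$ all go through without difficulty; the replacement of $\log\log \mathbf{N}(m)$ by $\log\log X$ and the control of $\omega-\omega_Y$ are routine, as you say.
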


Our goal is to prove a generalization of the large deviation principle in \cite{MZ} to the setting of abelian monoids considered by Liu. In particular, the large deviation principle in our main theorem, Theorem \ref{thm:Main-MonoidMZ} below, applies to the distributions \eqref{eq:LiuEK}. 

Our large deviation result will apply to a class of ``strongly additive'' functions. 

\begin{condition} \label{condition:additive}
Let $g: \mathcal{M} \to \mathbb{N}$ be a strongly additive function, i.e. $g$ satisfies the following two conditions:
\begin{align*}
    g(p^k) &= g(p) \text{ for all prime elements } p \in \mathcal{P}, \\
    g(mn) &= g(m) + g(n) \text{ for all } m,n \in \mathcal{M} \text{ such that } (m,n) = 1.
\end{align*}
We suppose further that there exists a probability measure $\rho$ on $\mathbb{R}$ satisfying the following two conditions.
\begin{enumerate}
    \item For any $\theta \in \mathbb{R}$, we have $\int_\mathbb{R} e^{\theta y} \rho(dy) < \infty$.
    \item Let $A \subset \mathbb{R}$ be any Borel measurable set. Define probability measures $\rho_X$ given by
    \begin{equation}
        \rho_X(A) := \frac{\displaystyle \sum_{\substack{g(p) \in A \\ \mathbf{N}(p) \leq X}} \frac{1}{\mathbf{N}(p)}}{\displaystyle\sum_{\mathbf{N}(p) \leq X} \frac{1}{\mathbf{N}(p)}}.
    \end{equation}
    Then for any $\theta \in \mathbb{R}$, we have $\int_\mathbb{R} e^{\theta y} \rho_X(dy) \to \int_\mathbb{R} e^{\theta y} \rho(dy)$.
\end{enumerate}
\end{condition}

The application we have in mind is when $g$ is the number-of-distinct-prime-divisors function, $\omega$. On $\mathcal{M}$, this means for $m = \prod_{p \in \mathcal P}p^{e_p}$, let $\omega(m)$ be the number of $p\in \mathcal{P}$ for which $e_p \geq 1$.  
 
\begin{definition} \label{def:VZ}
    For each $n  \in \mathbb{N}$, we denote by $V(n)$ a uniformly chosen monoid element from the set $\{m \in \mathcal{M} : \mathbf{N}(m) \leq n\}$. For each $p \in \mathcal{P}$, let $Z$ be a random variable such that $Z_p = 1$ if $V(n)$ is divisible by $p$, and $Z_p = 0$ otherwise. 
\end{definition}

We are now ready to state the main theorem. 

\begin{theorem}\label{thm:Main-MonoidMZ}
    Let $\mathcal{M}$ be a multiplicative abelian monoid generated by a set $\mathcal{P}$ and endowed with a norm function $\mathbf{N}: \mathcal{M} \to \mathbb{N}$, all satisfying Condition \ref{condition:Beurling}. Let $g: \mathcal{M} \to \mathbb{N}$ be an arithmetic function satisfying Condition \ref{condition:additive}. Let $W$ be a random variable over $\mathbb{N}$ defined as
    \begin{equation}
        W(n) := g(V(n)),
    \end{equation}
    where $V(n)$ is a uniformly chosen monoidal element from the set $\{m \in \mathcal{M}: \mathbf{N}(m) \leq n\}$, see Definition \ref{def:VZ}.
    Then for any Borel measurable set $A \subset \mathbb{R}$, the probability $\mathbb{P}\left[ \frac{W(X)}{ \log \log X} \in A \right]$ satisfies a large deviation principle with speed $\log \log X$ and rate function
    \begin{equation*}
        I(x) := \sup_{\theta \in \mathbb{R}} \left\{ \theta x - \int_\mathbb{R} (e^{\theta y} - 1) \rho (dy) \right\}.
    \end{equation*}
    More explicitly, we have for any Borel measurable set $A \subset \mathbb{R}$,
    \begin{align*}
        -\inf_{x \in A^o} I(x) &\leq \liminf_{X \to \infty} \frac{1}{\log \log X} \log \mathbb{P}\left[ \frac{W(X)}{ \log \log X} \in A \right] \\
        &\leq \limsup_{X \to \infty} \frac{1}{\log \log X} \log \mathbb{P}\left[ \frac{W(X)}{ \log \log X} \in A \right] \leq -\inf_{x \in \overline{A}} I(x),
    \end{align*}
    where $A^o$ is the interior of $A$, and $\overline{A}$ is the closure of $A$.
\end{theorem}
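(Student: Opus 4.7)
The plan is to establish the large deviation principle via the G\"artner--Ellis theorem. Concretely, I would show that for every $\theta \in \mathbb{R}$,
\[
\Lambda_X(\theta) \;:=\; \frac{1}{\log\log X}\log \mathbb{E}\bigl[e^{\theta W(X)}\bigr] \;\longrightarrow\; \Lambda(\theta) \;:=\; \int_{\mathbb{R}}(e^{\theta y}-1)\,\rho(dy)
\]
as $X \to \infty$. By Condition \ref{condition:additive}(1), $\Lambda$ is finite on all of $\mathbb{R}$ and, by differentiation under the integral, smooth and strictly convex, so G\"artner--Ellis delivers the full LDP with rate function equal to the Legendre transform of $\Lambda$, which is precisely the function $I$ in the statement.

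To compute $\Lambda_X(\theta)$, strong additivity gives $W(X) = \sum_{p \in \mathcal{P}} g(p) Z_p$ with $Z_p = \mathbf{1}_{p \mid V(X)}$. If the $Z_p$ were independent Bernoullis with parameter $1/\mathbf{N}(p)$, the MGF would factor and
\[
\log \mathbb{E}[e^{\theta W(X)}] \;=\; \sum_{\mathbf{N}(p)\leq X}\frac{e^{\theta g(p)}-1}{\mathbf{N}(p)} + E(X),
\]
where $E(X) = o(\log\log X)$ comes from the Taylor expansion of $\log(1+x)$, using $\sum_p \mathbf{N}(p)^{-2} < \infty$ (from Condition \ref{condition:Beurling}(2) by partial summation) together with the exponential moment hypothesis on $\rho$. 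Rewriting the main sum via $\rho_X$,
\[
\sum_{\mathbf{N}(p)\leq X}\frac{e^{\theta g(p)}-1}{\mathbf{N}(p)} \;=\; \Biggl(\sum_{\mathbf{N}(p)\leq X}\frac{1}{\mathbf{N}(p)}\Biggr)\int_{\mathbb{R}}(e^{\theta y}-1)\,\rho_X(dy),
\]
and combining the Beurling--Mertens estimate $\sum_{\mathbf{N}(p)\leq X}\mathbf{N}(p)^{-1} = \log\log X + O(1)$ (derived from Condition \ref{condition:Beurling}(1)--(2) by partial summation) with Condition \ref{condition:additive}(2) would conclude $\Lambda_X(\theta) \to \Lambda(\theta)$.

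The main obstacle is justifying the independence heuristic: the $Z_p$ are only approximately independent, and $\mathbb{P}[Z_p=1]$ only approximately equals $1/\mathbf{N}(p)$. The key quantitative input is the uniform divisor count
\[
\#\bigl\{m \in \mathcal{M} : \mathbf{N}(m) \leq X,\ d \mid m\bigr\} \;=\; \frac{aX}{\mathbf{N}(d)} + O\!\left(\Bigl(\tfrac{X}{\mathbf{N}(d)}\Bigr)^{\!b}\right),
\]
inherited from Condition \ref{condition:Beurling}(1) applied to the shifted set $\{m \in \mathcal{M} : d \mid m\}$. To exploit this I would truncate at a slowly-growing cutoff $Y = Y(X)$ (e.g.\ $Y = X^{1/(\log\log X)^2}$), split $W(X) = W_{\leq Y}(X) + W_{>Y}(X)$, and handle the two pieces separately. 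For the small primes, expand $\mathbb{E}[e^{\theta W_{\leq Y}(X)}]$ by inclusion--exclusion over squarefree divisors of $V(X)$ supported on $\{p : \mathbf{N}(p)\leq Y\}$ and insert the uniform divisor count above; the accumulated error is controllable provided $Y$ grows slowly enough relative to $X^{1-b}$. For the large primes, a direct bound using Conditions \ref{condition:Beurling}(2) and \ref{condition:additive}(1)--(2) shows the tail contribution is $o(\log\log X)$ after taking logs. Combining the two estimates yields $\Lambda_X(\theta) \to \Lambda(\theta)$ pointwise, and G\"artner--Ellis then closes the argument.
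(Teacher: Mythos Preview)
Your overall strategy matches the paper's: G\"artner--Ellis after comparing the dependent indicators $Z_p$ with independent Bernoullis $Y_p$, using the same norm cutoff $Y=k_X=X^{1/(\log\log X)^2}$. The genuine gap is in the $Z_p\to Y_p$ comparison for the small primes. Expanding $\mathbb{E}[e^{\theta W_{\leq Y}(X)}]$ over squarefree divisors produces $2^{\pi(Y)}$ terms, each carrying an error $O(X^{b-1})$ from the divisor count, so the accumulated error is of order
\[
X^{b-1}\prod_{\mathbf{N}(p)\leq Y}\bigl(1+|e^{\theta g(p)}-1|\bigr).
\]
Since $\pi(Y)$ grows faster than any power of $\log X$ for this choice of $Y$, the product overwhelms the $X^{b-1}$ saving---already in the model case $g=\omega$---no matter how slowly $Y$ grows relative to $X^{1-b}$. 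Condition~\ref{condition:additive} only bounds $\sum e^{\theta g(p)}/\mathbf{N}(p)$, not the unweighted product needed here; the same obstruction hits your Taylor remainder $E(X)$, which involves $\sum_p e^{2\theta g(p)}/\mathbf{N}(p)^2$.

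The paper circumvents this with two further devices. First, it introduces a second truncation $|g(p)|\leq C$, so that $e^{\theta g(p)}$ is uniformly bounded on the relevant primes. Second, and crucially (Lemma~\ref{lemma:MZ9}), rather than the full subset expansion it Taylor-expands the moment generating function in $\theta$ and splits at $r\approx K\log\log X$: for $r\leq K\log\log X$ the $r$th moments of $\sum g(p)Z_p$ and $\sum g(p)Y_p$ differ by at most $BX^{b-1}(Ck_X)^r$, and $k_X^{K\log\log X}=X^{K/\log\log X}=o(X^{1-b})$; the Taylor tail $r>K\log\log X$ is bounded directly. This yields the LDP for the doubly truncated sum $\sum_{p\in B(X,C)}g(p)Z_p$ with rate $I_C$. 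The two truncations are then removed not by recomputing $\Lambda_X(\theta)$ for the full $W(X)$ but by showing (Lemmas~\ref{lemma:MZ7} and~\ref{lemma:MZ8}) that the omitted pieces are superexponentially negligible, and finally letting $C\to\infty$ so that $I_C\to I$ via an exponential-approximation argument.
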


The strategy of the proof will closely follow that employed in \cite{MZ}, but with suitable modifications that accommodate the more general setting.

\section{Applications}\label{sec:applications}
Note that taking $\mathcal{P}$ to be the set of rational primes and $\mathbf{N}(p) = p$, Theorem \ref{thm:Main-MonoidMZ} recovers exactly Theorem 2 of Mehrdad and Zhu \cite{MZ}. Furthermore, taking $\mathcal{P}$ to be the set of monic, irreducible polynomials $f$ over a finite field $\mathbb{F}_q$ and $\mathbf{N}(f) = q^{\deg f}$ recovers the large deviation result of Feng, Wang, and Yang \cite{FWY20}. 

Two additional settings are natural to consider for applying Theorem \ref{thm:Main-MonoidMZ} in conjunction with Theorem \ref{thm:LiuEK}:

\begin{itemize}
    \item \textit{Number Fields:} Let $\mathcal{O}_K$ be the ring of integers of a number field, $K$. Let $\mathcal{P}$ be the set of prime ideals of $\mathcal{O}_K$, so $\mathcal{M}$ is the set of ideals of $\mathcal{O}_K$. As usual, for $\mathfrak{p} \in \mathcal P$, let $\mathbf{N}(\mathfrak{p})=|\mathcal{O}_K/\mathfrak{p}|$. Condition \ref{condition:additive} is satisfied by work of Weber \cite{Weber96}, see also \cite[Chapter XIII]{LangANT94} (for I), and by the prime ideal theorem (for II). 

\item \textit{Nice Varieties:}
Let $V$ be any $d$-dimensional smooth, projective irreducible, geometrically integral variety defined over a finite field, $\mathbb{F}_q$. Let $\mathcal{P}$ be the set of closed points of $V/\mathbb{F}_q$, so $\mathcal{M}$ is the monoid of effective $0$-cycles. For $P \in \mathcal{P}$, take $\mathbf{N}(P) = q^{d\cdot\deg(P)}$, where $\deg(P)$ is the size of the Galois orbit of $P$ in $V(\mathbb{\bar F}_q)$.  Condition \ref{condition:additive} holds for this choice of $\mathcal{P},\mathcal{M},\mathbf{N}$ by \cite{LangWeil54}; see \cite{Rosen02} for an exposition of the case of curves over $\mathbb{F}_q$.
\end{itemize} 

In the examples above, taking $g$ to be the appropriate number-of-distinct-prime-divisors function, $\omega$, on $\mathcal{M}$, Theorem \ref{thm:Main-MonoidMZ} gives a large deviation principle with an explicit rate function for the corresponding version of the Erd{\H{o}}s-Kac Theorem given by Theorem \ref{thm:LiuEK}. We can use \cite[Corollary 3]{MZ} to explicitly compute the rate function $I(x)$ as
\begin{equation}
    I(x) := \begin{cases}
        x \log x - x + 1 &\text{ if } x \geq 0, \\
        +\infty &\text{ otherwise}.
    \end{cases}
\end{equation}

\section{Preliminary Results}\label{sec:lemmas}

In this section we gather some results and prove some lemmas in preparation for the proof of Theorem \ref{thm:Main-MonoidMZ}.

The following version of Mertens' second theorem will be a useful estimate. 
\begin{lemma}[Lemma 2, \cite{Liu-Turan}] \label{lemma:Liu-Turan}
    If $\mathcal{P},\mathcal{M}, \mathbf{N}$ satisfy Condition \ref{condition:Beurling}, then 
    $$\sum_{ \substack{ p \in \mathcal{P} \\ \mathbf{N}(p) < X}} \frac{1}{\mathbf{N}(p)} = \log \log X + O\left(\frac{1}{\log X}\right).$$
\end{lemma}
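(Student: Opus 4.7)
The plan is to adapt the classical Mertens argument to the Beurling-type setting, using Condition \ref{condition:Beurling} in place of the usual counting estimates for rational primes. The key device is the generalized von Mangoldt function $\Lambda_\mathcal{M}$ on $\mathcal{M}$ defined by $\Lambda_\mathcal{M}(p^k) = \log \mathbf{N}(p)$ for each $p \in \mathcal{P}$ and $k \geq 1$, and $\Lambda_\mathcal{M}(m) = 0$ otherwise. Since $\mathcal{M}$ is the free abelian monoid on $\mathcal{P}$, this gives the identity $\log \mathbf{N}(m) = \sum_{d \mid m} \Lambda_\mathcal{M}(d)$.

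First, I would evaluate $\sum_{\mathbf{N}(m) \leq X} \log \mathbf{N}(m)$ in two different ways. Abel summation applied to the counting function $M(X) = aX + O(X^b)$ from Condition \ref{condition:Beurling}(1) gives one expression of the form $aX \log X + O(X + X^b \log X)$. Swapping the order of summation via the identity above gives a second expression, $\sum_{\mathbf{N}(d) \leq X} \Lambda_\mathcal{M}(d) \, M(X/\mathbf{N}(d))$. Substituting Condition \ref{condition:Beurling}(1) and comparing the two expressions yields
\[
\sum_{\mathbf{N}(d) \leq X} \frac{\Lambda_\mathcal{M}(d)}{\mathbf{N}(d)} = \log X + O(1).
\]
The prime-power terms with $k \geq 2$ contribute only $O(1)$ to this sum, using Condition \ref{condition:Beurling}(2) to control $\sum_p \log \mathbf{N}(p)/\mathbf{N}(p)^2$ via partial summation, so the intermediate Mertens-type estimate
\[
\sum_{\mathbf{N}(p) \leq X} \frac{\log \mathbf{N}(p)}{\mathbf{N}(p)} = \log X + O(1)
\]
drops out.

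Second, I would extract the target sum via partial summation. Writing $T(X)$ for the weighted prime sum just obtained, Abel summation with weight $1/\log t$ yields
\[
\sum_{\mathbf{N}(p) \leq X} \frac{1}{\mathbf{N}(p)} = \frac{T(X)}{\log X} + \int_{c}^{X} \frac{T(t)}{t(\log t)^2}\, dt
\]
for a suitable constant $c$ below the smallest norm in $\mathcal{P}$. Substituting $T(t) = \log t + O(1)$ and using $\int dt/(t \log t) = \log \log t$, the main term $\log \log X$ arises from the integral, while the boundary contribution and the integral of $O(1)/(t(\log t)^2)$ collectively give an absolute constant up to an error of size $O(1/\log X)$.

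The main obstacle I expect is the error analysis in the first step. When one plugs $M(X/\mathbf{N}(d)) = aX/\mathbf{N}(d) + O((X/\mathbf{N}(d))^b)$ into the swap, the secondary error has the form $O\bigl(X^b \sum_{\mathbf{N}(d) \leq X} \Lambda_\mathcal{M}(d)/\mathbf{N}(d)^b\bigr)$, and showing this is $O(X)$ so that it contributes only $O(1)$ after dividing through by $X$ requires carefully exploiting the upper bound $\sum_{\mathbf{N}(p) \leq X} 1 = O(X/\log X)$ from Condition \ref{condition:Beurling}(2) together with the hypothesis $b < 1$; a second partial summation on this inner sum is what closes the argument. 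Once the clean bound $\log X + O(1)$ in the first step is secured, the passage to $\log \log X + O(1/\log X)$ in the second step is essentially routine.
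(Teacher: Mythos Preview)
The paper does not actually supply a proof of this lemma; it is quoted from Liu's paper \cite{Liu-Turan} and used as a black box. So there is no in-paper argument to compare against, and your outline must be judged on its own.

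Your strategy is the standard Mertens--Chebyshev argument, and it is carried out correctly: the double-counting of $\sum_{\mathbf{N}(m)\le X}\log\mathbf{N}(m)$ via $\Lambda_{\mathcal M}$ together with Condition~\ref{condition:Beurling}(1) gives $\sum_{\mathbf{N}(d)\le X}\Lambda_{\mathcal M}(d)/\mathbf{N}(d)=\log X+O(1)$, the prime-power tail is $O(1)$ by Condition~\ref{condition:Beurling}(2) and partial summation, and a second Abel summation with weight $1/\log t$ converts this into the sum $\sum 1/\mathbf{N}(p)$. The error analysis you flag in the first step (the term $X^b\sum \Lambda_{\mathcal M}(d)/\mathbf{N}(d)^b$) is indeed the only place that needs care, and your plan to close it with Condition~\ref{condition:Beurling}(2), $b<1$, and a further partial summation is the right one.

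One point to be aware of: your own argument, as you correctly note in the last paragraph, produces
\[
\sum_{\mathbf{N}(p)<X}\frac{1}{\mathbf{N}(p)}=\log\log X + C + O\!\left(\frac{1}{\log X}\right)
\]
for some absolute constant $C$ depending on $(\mathcal{P},\mathbf{N})$, and your method gives no mechanism to show $C=0$. In fact $C$ is generally nonzero (for the rational primes it is the Meissel--Mertens constant $\approx 0.2614$), so the lemma as literally stated in the paper is slightly imprecise. This is not a defect in your proof; every application of Lemma~\ref{lemma:Liu-Turan} in the paper either divides by $\log\log X$ or takes a difference at two scales, so the constant is immaterial throughout.
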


Notice Lemma \ref{lemma:Liu-Turan} implies that the number-of-distinct-prime-divisors function $\omega$ on $\mathcal{M}$ satisfies Condition \ref{condition:additive}.

We next state a version of G\"artner-Ellis Theorem as given in \cite{MZ}; it is this result from which we will obtain the desired large deviation principle. 

\begin{theorem}[Page 152 of \cite{MZ}, and Theorem 2.3.6 of \cite{DZ}] \label{theorem:Gartner-Ellis}
    Let $Z_n$ be a sequence of random variables on $\mathbb{R}$. Let $a_n$ be a sequence of positive numbers such that $\lim_{n \to \infty} a_n = \infty$. Suppose that for any $\theta \in \mathbb{R}$, the limit
    \begin{equation*}
        \Lambda(\theta) := \lim_{n \to \infty} \frac{1}{a_n} \log \mathbb{E}[\exp \left( \theta a_n Z_n \right)]
    \end{equation*}
    exists and is differentiable for every $\theta \in \mathbb{R}$. Then for any Borel measurable set $A \subset \mathbb{R}$, the probability $\mathbb{P}[Z_n \in A]$ satisfies a large deviation principle with speed $a_n$ and rate function
    \begin{equation*}
        I(x) := \sup_{\theta \in \mathbb{R}} \left\{ \theta x - \Lambda(\theta) \right\}.
    \end{equation*}
    More concretely, we have for any Borel measurable set $A \subset \mathbb{R}$,
    \begin{align*}
        -\inf_{x \in A^o} I(x) \leq \liminf_{n \to \infty} \frac{1}{a_n} \log \mathbb{P}[Z_n \in A] \leq \liminf_{n \to \infty} \frac{1}{a_n} \log \mathbb{P}[Z_n \in A] \leq -\inf_{x \in \overline{A}} I(x)
    \end{align*}
\end{theorem}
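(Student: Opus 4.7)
The plan is to prove the upper and lower bounds of the large deviation principle separately, following the standard exponential-tilting strategy for the G\"artner-Ellis theorem. First I would observe that $\Lambda$ is convex (an immediate consequence of H\"older's inequality applied to the moment generating function) and finite everywhere by hypothesis, hence automatically continuous, so its Legendre-Fenchel transform $I(x) = \sup_\theta \{\theta x - \Lambda(\theta)\}$ is a well-defined, convex, lower semicontinuous function of $x$.

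For the upper bound, I would begin from the Chernoff inequality: for $\theta \geq 0$,
\begin{equation*}
    \mathbb{P}[Z_n \geq x] \leq e^{-\theta a_n x} \mathbb{E}[e^{\theta a_n Z_n}],
\end{equation*}
so taking logs, dividing by $a_n$, and passing to $\limsup$ yields $\limsup_n \tfrac{1}{a_n}\log\mathbb{P}[Z_n \geq x] \leq -(\theta x - \Lambda(\theta))$. Optimizing over $\theta \geq 0$ (and symmetrically over $\theta \leq 0$ for the lower tail) gives the bound on half-lines. To extend to an arbitrary closed $F$, I would first prove the bound on compact sets by covering $F \cap K$ by finitely many small intervals, applying the half-line bound on each endpoint, and taking the minimum; then I would upgrade from compact to closed sets by establishing exponential tightness, which follows from the finiteness of $\Lambda(\pm\theta)$ for large $\theta$ via Markov's inequality applied to $e^{\theta a_n |Z_n|}$.

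For the lower bound, the key tool is a change-of-measure argument. Given $x \in A^o$, differentiability and convexity of $\Lambda$ allow one to select $\theta_x$ with $\Lambda'(\theta_x) = x$ (treating boundary values of $x$ via approximation by interior points and lower semicontinuity of $I$). I would define the tilted measure $\tilde{\mathbb{P}}_n$ by the Radon-Nikodym derivative
\begin{equation*}
    \frac{d\tilde{\mathbb{P}}_n}{d\mathbb{P}} = \frac{e^{\theta_x a_n Z_n}}{\mathbb{E}[e^{\theta_x a_n Z_n}]},
\end{equation*}
and for any $\delta > 0$ with $(x-\delta, x+\delta) \subset A$, estimate
\begin{equation*}
    \mathbb{P}[Z_n \in A] \geq \mathbb{E}[e^{\theta_x a_n Z_n}] \cdot e^{-a_n(\theta_x x + |\theta_x|\delta)} \cdot \tilde{\mathbb{P}}_n[|Z_n - x| < \delta].
\end{equation*}
Taking logs and dividing by $a_n$, the first two factors contribute $\Lambda(\theta_x) - \theta_x x - |\theta_x|\delta = -I(x) - |\theta_x|\delta$, so it remains to show $\tilde{\mathbb{P}}_n[|Z_n - x| < \delta] \to 1$. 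The log-moment generating function of $Z_n$ under $\tilde{\mathbb{P}}_n$ is $\tfrac{1}{a_n}\log\tilde{\mathbb{E}}_n[e^{\phi a_n Z_n}] = \tfrac{1}{a_n}\log\mathbb{E}[e^{(\phi+\theta_x) a_n Z_n}] - \tfrac{1}{a_n}\log\mathbb{E}[e^{\theta_x a_n Z_n}] \to \Lambda(\phi+\theta_x) - \Lambda(\theta_x)$, which is differentiable at $\phi = 0$ with derivative $\Lambda'(\theta_x) = x$; a Chernoff-type argument under $\tilde{\mathbb{P}}_n$ then forces concentration of $Z_n$ at $x$.

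The main obstacle is the lower-bound concentration step: one has to leverage the differentiability of $\Lambda$ at every $\theta$ (ensuring "essential smoothness" in the Dembo-Zeitouni sense) to guarantee that the tilted distribution genuinely concentrates at $x = \Lambda'(\theta_x)$ rather than drifting to infinity. A secondary subtlety is handling boundary cases where $x \in A^o$ is not in the image of $\Lambda'$, which I would resolve by approximating $x$ by interior values of $\Lambda'$ and invoking the lower semicontinuity of $I$ to conclude $\inf_{A^o} I$ is achieved (or approximated) by points in the range where the tilting argument applies directly.
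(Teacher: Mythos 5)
This theorem is not proved in the paper at all: it is the G\"artner--Ellis theorem, quoted verbatim from Mehrdad--Zhu and from Dembo--Zeitouni (Theorem 2.3.6 of \cite{DZ}), so there is no internal argument to compare yours against. Your outline is essentially the standard proof of that cited result, specialized to the one-dimensional case in which $\Lambda$ is finite and differentiable on all of $\mathbb{R}$ (so essential smoothness is automatic, the effective domain having no boundary), and the plan is sound: Chernoff bounds give the half-line and compact-set upper bounds, exponential tightness does follow from finiteness of $\Lambda(\pm\theta)$ via Markov applied to $e^{\theta a_n Z_n}+e^{-\theta a_n Z_n}$, and the exponential tilting argument gives the lower bound at points $x=\Lambda'(\theta_x)$, where differentiability of $\Lambda$ at $\theta_x$ is precisely what forces the tilted rate function $\tilde I(z)=I(z)-\theta_x z+\Lambda(\theta_x)$ to vanish only at $z=x$, yielding the concentration $\tilde{\mathbb{P}}_n[|Z_n-x|<\delta]\to 1$. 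The one place your sketch is looser than the standard treatment is the final approximation step: for $x\in A^o$ with $I(x)<\infty$ but $x$ not in the range of $\Lambda'$ (an endpoint of the effective domain of $I$), bare lower semicontinuity of $I$ does not by itself let you pass the bound from nearby exposed points to $x$; what you actually need is that the one-dimensional closed convex function $I$ is continuous relative to its effective domain (equivalently, the Dembo--Zeitouni reduction to exposed points together with Rockafellar's lemma), after which the approximation argument goes through as you describe.
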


For every element $p \in \mathcal{P}$, define independent random variables $Y_p$ for which 
$$Y_p = \begin{cases}
    1 \quad&\text{with probability }\frac{1}{\mathbf{N}(p)} \\
    0 \quad&\text{with probability } 1- \frac{1}{\mathbf{N}(p)}
\end{cases}.$$

\begin{lemma} \label{lemma:evZpYp}
Given $p \in \mathcal{P}$, we recall the random variable $Z_p$ supported over the set $\{m \in \mathcal{M} : \mathbf{N}(m) \leq X\}$ from Definition \ref{def:VZ}.
For distinct $p_1,...,p_k \in \mathcal{P}$ we have 
    \begin{equation}\label{eq:evZp}
        \mathbb{E}[Z_{p_1}Z_{p_2}...Z_{p_k}] = \frac{\#\{m \in \mathcal{M} : \mathbf{N}(m) \leq \lfloor \frac{X}{\mathbf{N}(p_1) \mathbf{N}(p_2) \cdots \mathbf{N}(p_k)} \rfloor \}}{\#\{ m \in \mathcal{M} : \mathbf{N}(m) \leq X \}},
    \end{equation}
    and 
    \begin{equation}\label{eq:evYp}
        \mathbb{E}[Y_{p_1}Y_{p_2}...Y_{p_k}] = \frac{1}{\mathbf{N}(p_1) \mathbf{N}(p_2) \cdots \mathbf{N}(p_k)}.
    \end{equation}
\end{lemma}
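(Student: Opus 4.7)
The plan is to prove the two identities separately, starting with the easier one. For equation \eqref{eq:evYp}, the random variables $\{Y_p\}_{p \in \mathcal{P}}$ were defined to be independent Bernoulli variables with $\mathbb{E}[Y_p] = 1/\mathbf{N}(p)$, so I would simply invoke independence to factor the expectation as
\begin{equation*}
\mathbb{E}[Y_{p_1} \cdots Y_{p_k}] = \prod_{i=1}^k \mathbb{E}[Y_{p_i}] = \prod_{i=1}^k \frac{1}{\mathbf{N}(p_i)}.
\end{equation*}

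For equation \eqref{eq:evZp}, the key observation is that each $Z_p$ is the indicator of the event $\{p \mid V(X)\}$, so the product $Z_{p_1} \cdots Z_{p_k}$ is the indicator of the event that $V(X)$ is divisible by each of the distinct primes $p_1,\dots,p_k$. By unique factorization in the free abelian monoid $\mathcal{M}$, this is equivalent to $V(X)$ being divisible by $P := p_1 \cdots p_k$. Since $V(X)$ is uniform on $\{m \in \mathcal{M} : \mathbf{N}(m) \leq X\}$, I would then write
\begin{equation*}
\mathbb{E}[Z_{p_1} \cdots Z_{p_k}] = \mathbb{P}[P \mid V(X)] = \frac{\#\{m \in \mathcal{M} : \mathbf{N}(m) \leq X,\ P \mid m\}}{\#\{m \in \mathcal{M} : \mathbf{N}(m) \leq X\}}.
\end{equation*}

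To recognize the numerator as the quantity appearing in the lemma, I would exhibit the bijection $m \leftrightarrow m' := m/P$ between $\{m \in \mathcal{M} : \mathbf{N}(m) \leq X,\ P \mid m\}$ and $\{m' \in \mathcal{M} : \mathbf{N}(m') \leq X/\mathbf{N}(P)\}$, which is valid because $\mathbf{N}$ is a monoid morphism, hence $\mathbf{N}(m) = \mathbf{N}(P)\mathbf{N}(m')$. Since $\mathbf{N}(m')$ takes values in $\mathbb{N}$, the inequality $\mathbf{N}(m') \leq X/\mathbf{N}(P)$ is equivalent to $\mathbf{N}(m') \leq \lfloor X/\mathbf{N}(P) \rfloor$, which produces the floor in the statement.

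I do not anticipate a serious obstacle here: the lemma is essentially a bookkeeping fact. The only point that requires care is ensuring that the reduction from ``$p_i \mid V(X)$ for all $i$'' to ``$P \mid V(X)$'' uses the fact that the $p_i$ are distinct prime elements of $\mathcal{P}$ together with unique factorization in the free abelian monoid $\mathcal{M}$; this is implicit in the description of $\mathcal{M}$ given before Condition \ref{condition:Beurling}.
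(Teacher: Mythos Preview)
Your proposal is correct and follows essentially the same approach as the paper: the paper also reduces the product $Z_{p_1}\cdots Z_{p_k}$ to the indicator that $p_1\cdots p_k \mid m$, then uses the bijection $m \leftrightarrow a = m/(p_1\cdots p_k)$ together with multiplicativity of $\mathbf{N}$ to rewrite the numerator, and dismisses \eqref{eq:evYp} as immediate from the definition (i.e., independence) of the $Y_p$. Your remark that the floor arises because $\mathbf{N}$ is integer-valued is exactly the implicit step in the paper's passage from $\mathbf{N}(a) \leq X/\mathbf{N}(P)$ to $\mathbf{N}(a) \leq \lfloor X/\mathbf{N}(P)\rfloor$.
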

\begin{proof}
    Given $m \in \mathcal{M}$, we have $Z_{p_1}(m) Z_{p_2}(m) \cdots Z_{p_k}(m) = 1$ if and only if there exists an element $a \in \mathcal{M}$ such that $m = p_1 p_2 \cdots p_k a$. For such an $m$, we have $\mathbf{N}(m) \leq X$ if and only if $\mathbf{N}(a) \leq \lfloor \frac{X}{\mathbf{N}(p_1) \mathbf{N}(p_2) \cdots \mathbf{N}(p_k)} \rfloor$. Hence, we have
    \begin{align*}
        \mathbb{E}[Z_{p_1} Z_{p_2} \cdots Z_{p_k}] &= \frac{\sum_{\substack{m \in \mathcal{M} \\ \mathbf{N}(m) \leq X}} \mathbbm{1}_{p_1p_2 \cdots p_k \mid m} }{\# \{m \in \mathcal{M} : \mathbf{N}(m) \leq X\}} \\
        &= \frac{\#\{m \in \mathcal{M}: m = p_1 p_2 \cdots p_k a, \mathbf{N}(m) \leq X\}}{\# \{m \in \mathcal{M} : \mathbf{N}(m) \leq X\}} \\
        &= \frac{\#\{m \in \mathcal{M} : \mathbf{N}(m) \leq \lfloor \frac{X}{\mathbf{N}(p_1) \mathbf{N}(p_2) \cdots \mathbf{N}(p_k)} \rfloor \}}{\#\{ m \in \mathcal{M} : \mathbf{N}(m) \leq X \}}.
    \end{align*}
    The second equation follows immediately from the definition of $Y_p$.
\end{proof}

\begin{lemma} \label{lemma:evZpYp-condition}
    Suppose Condition \ref{condition:Beurling}.
    \begin{enumerate}
    \item For distinct primes $p_1, \cdots, p_k \in \mathcal{P}$, we have
        \begin{equation*}
        \mathbb{E}[Z_{p_1}Z_{p_2}...Z_{p_k}] = \frac{1}{aX} \lfloor \frac{aX}{\mathbf{N}(p_1) \mathbf{N}(p_2) \cdots \mathbf{N}(p_k)} \rfloor + O \left( \frac{X^{b-1}}{\mathbf{N}(p_1)^b \mathbf{N}(p_2)^b \cdots \mathbf{N}(p_k)^b} \right).
    \end{equation*}
    \item For distinct primes $p_1, \cdots, p_k \in \mathcal{P}$, we have $$\mathbb{E}[Z_{p_1}Z_{p_2}...Z_{p_k}] \leq \mathbb{E}[Y_{p_1}Y_{p_2}...Y_{p_k}] + O \left( \frac{X^{b-1}}{\mathbf{N}(p_1)^b \mathbf{N}(p_2)^b \cdots \mathbf{N}(p_k)^b} \right).$$
    \item For sufficiently large $X$ there exists an explicit positive constant $M$, independent of the choice of distinct primes $p_1, \cdots, p_k \in \mathcal{P}$, such that $$\mathbb{E}[Z_{p_1}Z_{p_2}...Z_{p_k}] \leq M \cdot \mathbb{E}[Y_{p_1}Y_{p_2}...Y_{p_k}].$$
    \item For sufficiently large $X$, there exists an explicit constant $M$ such that for any non-negative sequence of real numbers $\{\theta_p\}_{p \in \mathcal{P}}$ we have
    $$\mathbb{E} \left[\exp \left( \sum_{\mathbf{N}(p) \leq X} \theta_p Z_p \right) \right] \leq M \cdot \mathbb{E} \left[\exp \left( \sum_{\mathbf{N}(p) \leq X} \theta_p Y_p \right)\right].$$
    \end{enumerate}
    The implied constants of all the error terms are independent of the choice of $p_1, \cdots, p_k$.
\end{lemma}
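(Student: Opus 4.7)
The plan is to prove the four parts in sequence, each building on the previous. For part (1), I would start from the formula for $\mathbb{E}[Z_{p_1}\cdots Z_{p_k}]$ given by Lemma \ref{lemma:evZpYp} and apply Condition \ref{condition:Beurling}(1) to both the numerator $\#\{m : \mathbf{N}(m) \leq \lfloor X/P \rfloor\}$ and the denominator $\#\{m : \mathbf{N}(m) \leq X\}$, where $P := \mathbf{N}(p_1) \cdots \mathbf{N}(p_k)$. The main term $a \lfloor X/P \rfloor / (aX)$ can be rewritten as $\lfloor aX/P \rfloor /(aX)$ up to $O(1/X)$ error, using that $a\lfloor X/P\rfloor$ and $\lfloor aX/P \rfloor$ agree up to $O(1)$. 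The multiplicative error produced by expanding the denominator gives the claimed $O(X^{b-1}/P^b)$ term, which absorbs the smaller $O(1/X)$ contribution from the floor discrepancy since $b \geq 0$.

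Parts (2) and (3) follow quickly. For part (2), one observes $\lfloor aX/P \rfloor / (aX) \leq 1/P = \mathbb{E}[Y_{p_1}\cdots Y_{p_k}]$ directly from part (1) and equation \eqref{eq:evYp}. For part (3), the key observation is a dichotomy: if $P > X$, then $\lfloor X/P \rfloor = 0$ makes $\mathbb{E}[Z_{p_1}\cdots Z_{p_k}]$ vanish and the inequality is trivial; if $P \leq X$, then since $b - 1 < 0$ we have $X^{b-1}/P^b = (X/P)^{b-1} \cdot (1/P) \leq 1/P = \mathbb{E}[Y_{p_1}\cdots Y_{p_k}]$, so the error in part (2) is absorbed into $\mathbb{E}[Y_{p_1}\cdots Y_{p_k}]$ at the cost of a multiplicative constant $M := 1 + C$, where $C$ is the implied constant from part (2).

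For part (4), I would use that each $Z_p, Y_p \in \{0,1\}$ to write $\exp(\theta_p Z_p) = 1 + (e^{\theta_p}-1)Z_p$ and similarly for $Y_p$. Expanding the product over $\{p \in \mathcal{P} : \mathbf{N}(p) \leq X\}$ yields
\begin{equation*}
\mathbb{E}\left[\exp\left(\sum_{\mathbf{N}(p) \leq X} \theta_p Z_p\right)\right] = \sum_S \prod_{p \in S}(e^{\theta_p}-1) \cdot \mathbb{E}\left[\prod_{p \in S} Z_p\right],
\end{equation*}
summed over finite subsets $S \subseteq \{p \in \mathcal{P} : \mathbf{N}(p) \leq X\}$. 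The hypothesis $\theta_p \geq 0$ gives $e^{\theta_p} - 1 \geq 0$, so part (3) can be applied termwise to replace each $\mathbb{E}[\prod_{p \in S} Z_p]$ by $M \cdot \mathbb{E}[\prod_{p \in S} Y_p]$. Reassembling the sum into an exponential then delivers the claimed inequality.

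The main obstacle is the case split in part (3). A naive attempt to absorb the error $X^{b-1}/P^b$ into $1/P$ fails because $(X/P)^{b-1}$ is unbounded when $P > X$; the fix is to recognize that the problematic regime $P > X$ is vacuous, as $\mathbb{E}[Z_{p_1}\cdots Z_{p_k}]$ is already zero there. Once this is handled, part (4) reduces to bookkeeping via the standard indicator-exponential identity.
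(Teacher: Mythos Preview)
Your proposal is correct and follows essentially the same route as the paper's proof: parts (1)--(3) are argued identically (Condition~\ref{condition:Beurling} applied to Lemma~\ref{lemma:evZpYp}, the trivial floor bound, and the same $P\le X$ versus $P>X$ dichotomy). For part (4), the paper phrases the argument as ``Taylor expansion'' of $\exp(\sum \theta_p Z_p)$ and then invokes $Z_p^{r}=Z_p$ to reduce all mixed moments to $\mathbb{E}[Z_{p_1}\cdots Z_{p_k}]$; your use of the identity $e^{\theta_p Z_p}=1+(e^{\theta_p}-1)Z_p$ and the finite product expansion is the same computation organized more transparently, and arguably the cleaner of the two.
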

\begin{proof}
    By Condition \ref{condition:Beurling} and Lemma \ref{lemma:evZpYp}, we have
    \begin{align*}
        \mathbb{E}[Z_{p_1} Z_{p_2} \cdots Z_{p_k}] &= \frac{\lfloor \frac{aX}{\mathbf{N}(p_1) \mathbf{N}(p_2) \cdots \mathbf{N}(p_k)} \rfloor + O \left( \frac{X^b}{\mathbf{N}(p_1)^b \mathbf{N}(p_2)^b \cdots \mathbf{N}(p_k)^b} \right)}{aX + O(X^b)} \\
        &= \frac{1}{aX} \left\lfloor \frac{aX}{\mathbf{N}(p_1) \mathbf{N}(p_2) \cdots \mathbf{N}(p_k)} \right\rfloor + O \left( \frac{X^{b-1}}{\mathbf{N}(p_1)^b \mathbf{N}(p_2)^b \cdots \mathbf{N}(p_k)^b} \right).
    \end{align*}
    The second statement follows from using the inequality
    \begin{equation*}
        \frac{1}{aX} \left\lfloor \frac{aX}{\mathbf{N}(p_1) \mathbf{N}(p_2) \cdots \mathbf{N}(p_k)} \right\rfloor \leq \frac{1}{\mathbf{N}(p_1) \mathbf{N}(p_2) \cdots \mathbf{N}(p_k)} = \mathbb{E}[Y_{p_1} Y_{p_2} \cdots Y_{p_k}].
    \end{equation*}
    To prove the third statement, we divide into two cases. Suppose $p_1, p_2, \cdots, p_k$ are distinct primes such that $\mathbf{N}(p_1) \mathbf{N}(p_2) \cdots \mathbf{N}(p_k) > X$. Then because all the random variables $Z_{p_1}, Z_{p_2}, \cdots, Z_{p_k}$ are supported over the probability space $\{m \in \mathcal{M} : \mathbf{N}(m) \leq X\}$, we have $$\mathbb{E}[Z_{p_1} Z_{p_2} \cdots Z_{p_k}] = 0 < \frac{1}{\mathbf{N}(p_1) \mathbf{N}(p_2) \cdots \mathbf{N}(p_k)} = \mathbb{E}[Y_{p_1} Y_{p_2} \cdots Y_{p_k}].$$ Now suppose $p_1, p_2, \cdots, p_k$ are distinct primes such that $\mathbf{N}(p_1) \mathbf{N}(p_2) \cdots \mathbf{N}(p_k) \leq X$. Then we have
    \begin{align*}
        \frac{X^{b-1}}{\mathbf{N}(p_1)^b \mathbf{N}(p_2)^b \cdots \mathbf{N}(p_k)^b} &= \frac{1}{X^{1-b}\mathbf{N}(p_1)^b \mathbf{N}(p_2)^b \cdots \mathbf{N}(p_k)^b} \\
        &\leq \frac{1}{\mathbf{N}(p_1) \mathbf{N}(p_2) \cdots \mathbf{N}(p_k)} = \mathbb{E}[Y_{p_1} Y_{p_2} \cdots Y_{p_k}].
    \end{align*}
    Hence, by the second statement of our lemma, there exists an absolute constant $M_1 > 0$ such that $$\mathbb{E}[Z_{p_1} Z_{p_2} \cdots Z_{p_k}] \leq M_1 \mathbb{E}[Y_{p_1} Y_{p_2} \cdots Y_{p_k}].$$ We take $M := \max(1, M_1)$ to obtain the third statement of the lemma.
    
    The fourth statement follows by using Taylor expansion. We crucially use the fact that for any non-negative integers $r_1, r_2 \cdots, r_k$ we have
    \begin{equation*}
        \mathbb{E}[Z_{p_1}^{r_1} Z_{p_2}^{r_2} \cdots Z_{p_k}^{r_k}] = \mathbb{E}[Z_{p_1} Z_{p_2} \cdots Z_{p_k}] \leq M \cdot \mathbb{E}[Y_{p_1} Y_{p_2} \cdots Y_{p_k}] = M \cdot \mathbb{E}[Y_{p_1}^{r_1} Y_{p_2}^{r_2} \cdots Y_{p_k}^{r_k}].
    \end{equation*}
\end{proof}

The next three lemmas generalize Lemmas 7, 8, and 9 of \cite{MZ} for the abelian monoidal setting. 

\begin{lemma}\label{lemma:MZ7} For any $\varepsilon > 0$, 
$$\limsup_{C \to \infty} \limsup_{X \to \infty} \frac{1}{\log \log X} \log \mathbb{P}\Bigg( \bigg| \sum_{\substack{ p \in \mathcal{P} \\ g(p) > C \\
\mathbf{N}p < X}} g(p)Z_p  \bigg| \geq \varepsilon \log \log X\Bigg) = -\infty.$$
\end{lemma}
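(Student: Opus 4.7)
The plan is to run a Chernoff/Markov argument on the moment generating function, using Lemma~\ref{lemma:evZpYp-condition}(4) to replace the correlated variables $Z_p$ by the independent Bernoulli variables $Y_p$ at the cost of a multiplicative constant $M$, and then exploit Condition~\ref{condition:additive} to control the resulting expression. Since $g$ is $\mathbb{N}$-valued and $Z_p \in \{0,1\}$, the sum $S := \sum_{g(p) > C,\, \mathbf{N}(p) < X} g(p) Z_p$ is nonnegative, so the absolute value is irrelevant. For any $\theta > 0$, Markov's inequality gives
\[
\mathbb{P}(S \geq \varepsilon \log\log X) \leq e^{-\theta \varepsilon \log\log X}\, \mathbb{E}[e^{\theta S}].
\]

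Next I would apply Lemma~\ref{lemma:evZpYp-condition}(4) with the nonnegative weights $\theta_p = \theta g(p) \mathbbm{1}_{g(p) > C,\, \mathbf{N}(p) < X}$ to obtain $\mathbb{E}[e^{\theta S}] \leq M\, \mathbb{E}[e^{\theta S'}]$, where $S' := \sum_{g(p)>C,\, \mathbf{N}(p)<X} g(p) Y_p$. Since the $Y_p$ are independent Bernoulli with parameter $1/\mathbf{N}(p)$, a direct computation combined with $1+x \leq e^x$ gives
\[
\mathbb{E}[e^{\theta S'}] = \prod_{\substack{g(p)>C \\ \mathbf{N}(p)<X}} \left(1 + \frac{e^{\theta g(p)} - 1}{\mathbf{N}(p)}\right) \leq \exp\!\left(\sum_{\substack{g(p)>C \\ \mathbf{N}(p)<X}} \frac{e^{\theta g(p)} - 1}{\mathbf{N}(p)}\right).
\]
Rewriting the exponent with the definition of $\rho_X$ and Lemma~\ref{lemma:Liu-Turan}, the last sum equals
\[
\bigl(\log\log X + O(1/\log X)\bigr) \int_{y > C} (e^{\theta y} - 1)\, \rho_X(dy).
\]
Taking logarithms, dividing by $\log\log X$, and letting $X \to \infty$, I get
\[
\limsup_{X\to\infty} \frac{\log \mathbb{P}(S \geq \varepsilon \log\log X)}{\log\log X} \leq -\theta \varepsilon + \limsup_{X\to\infty}\int_{y > C}(e^{\theta y}-1)\rho_X(dy).
\]

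The main obstacle is controlling the truncated tail integral, since the convergence $\int e^{\theta y}\rho_X(dy) \to \int e^{\theta y} \rho(dy)$ supplied by Condition~\ref{condition:additive}(2) does not directly imply convergence of tails. The trick is to use, for any $\delta > 0$ and $y > C$, the bound $e^{\theta y} \leq e^{-\delta C} e^{(\theta + \delta) y}$, which yields
\[
\int_{y > C} (e^{\theta y} - 1)\, \rho_X(dy) \leq e^{-\delta C}\int e^{(\theta + \delta) y}\, \rho_X(dy).
\]
Applying Condition~\ref{condition:additive}(2) at parameter $\theta + \delta$, the right-hand side converges to $e^{-\delta C}\int e^{(\theta+\delta)y}\rho(dy)$, a finite quantity by Condition~\ref{condition:additive}(1). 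Consequently
\[
\limsup_{C \to \infty}\limsup_{X \to \infty}\frac{\log \mathbb{P}(S \geq \varepsilon\log\log X)}{\log\log X} \leq -\theta \varepsilon.
\]
Since $\theta > 0$ was arbitrary, sending $\theta \to \infty$ produces the desired limit $-\infty$ and completes the argument.
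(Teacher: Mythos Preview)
Your proof is correct and follows the same Chernoff-plus-comparison-to-$Y_p$ strategy as the paper: exponential Chebyshev, Lemma~\ref{lemma:evZpYp-condition} to pass from $Z_p$ to $Y_p$, the independent MGF computation, Lemma~\ref{lemma:Liu-Turan}, and finally $\theta\to\infty$. Two small differences are worth noting. First, your observation that $g:\mathcal{M}\to\mathbb{N}$ makes the sum nonnegative (so the absolute value and the paper's two-sided split are unnecessary) is a legitimate simplification. Second, your exponential tilting bound $e^{\theta y}\le e^{-\delta C}e^{(\theta+\delta)y}$ for $y>C$ is actually a more careful justification of the tail step than the paper gives: the paper asserts directly that $\limsup_{X}\int_{y>C}(e^{\theta y}-1)\rho_X(dy)\le\int_{y>C}(e^{\theta y}-1)\rho(dy)$, but convergence of the full MGFs in Condition~\ref{condition:additive}(2) does not by itself imply this tail inequality without an argument of the sort you supply.
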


\begin{proof}
    If
    \begin{equation}\label{eq:logprobtail1}
        \limsup_{C \to \infty} \limsup_{X \to \infty} \frac{1}{\log \log X} \log \mathbb{P}\Bigg(  \sum_{\substack{ p \in \mathcal{P} \\ g(p) > C \\
        \mathbf{N}p < X}} g(p)Z_p   \geq \varepsilon \log \log X\Bigg) = -\infty,
    \end{equation}
    and 
    \begin{equation}\label{eq:logprobtail2}
        \limsup_{C \to \infty} \limsup_{X \to \infty} \frac{1}{\log \log X} \log \mathbb{P}\Bigg(  \sum_{\substack{ p \in \mathcal{P} \\ g(p) > C \\
        \mathbf{N}p < X}} g(p)Z_p   \leq  -\varepsilon \log \log X\Bigg) = -\infty,
    \end{equation}
    then the conclusion of the lemma follows. We begin by proving \eqref{eq:logprobtail1}. 

    By the exponential\footnote{For an integrable random variable  $X$ with finite, non-zero variance  and $\varepsilon > 0$,  $P(X\geq \varepsilon )\leq e^{-\theta \varepsilon} \mathbb{E}(e^{\theta X})$ for any $\theta>0.$} Chebyshev's Inequality with any $\theta > 0$,  and the third statement of Lemma \ref{lemma:evZpYp-condition}, there exists an explicit constant $M > 0$ such that 
    \begin{equation}\label{eq:expcheby}
    \begin{split}
        & \hspace{14pt} \limsup_{X \to \infty} \frac{1}{\log \log X} \log \mathbb{P}\left( \sum_{\substack{g(p)>C \\ \mathrm{N(p) < X}}} g(p) Z_p \geq \varepsilon\log \log X\right) \\ 
        &\leq 
         \limsup_{X \to \infty} \frac{1}{\log \log X} \log \mathbb{E}\left[  \exp\left(\sum_{\substack{g(p)>C \\ \mathrm{N(p) < X}}} g(p) Z_p\right)  \right]- \theta \varepsilon \\
         &\leq \limsup_{X \to \infty} \frac{1}{\log \log X} \left(\log \mathbb{E}\left[  \exp\left(\sum_{\substack{g(p)>C \\ \mathrm{N(p) < X}}} g(p) Y_p\right)\right]  + \log M \right)- \theta \varepsilon \\
         &= 
         \limsup_{X \to \infty} \log \mathbb{E} \frac{1}{\log \log X}\left[  \exp\left(\sum_{\substack{g(p)>C \\ \mathrm{N(p) < X}}} g(p) Y_p\right)  \right] - \theta \varepsilon.
     \end{split}
     \end{equation}

Applying Lemma \ref{lemma:Liu-Turan}, $\sum_{\mathbf{N}p \leq X} \frac{1}{\mathbf{N}(p)} \approx \log \log X$, and then using that $\log (x+1) \leq x$, we have

\begin{equation}\label{eq:applymertens1}
\begin{split}
& \hspace{14pt} \limsup_{X \to \infty} \log \mathbb{E} \frac{1}{\log \log X} \left[  \exp\left(\sum_{\substack{g(p)>C \\ \mathrm{N(p) < X}}} g(p) Y_p\right)  \right]- \theta \varepsilon  \\ &\leq \limsup_{X \to \infty}\frac{\sum_{\substack{g(p) > C \\ \mathbf{N}(p) \leq X } } \log (e^{\theta g(p)}-1)\mathbf{N}(p)^{-1}+1 )}{\sum_{\mathbf{N}(p) \leq X}  \mathbf{N}(p)^{-1}} - \theta \varepsilon \\
&\leq \limsup_{X \to \infty}\frac{\sum_{\substack{g(p) > C \\ \mathbf{N}(p) \leq X } } (e^{\theta g(p)}-1)\mathbf{N}(p)^{-1}}{\sum_{\mathbf{N}(p) \leq X}  \mathbf{N}(p)^{-1}} - \theta \varepsilon.
\end{split}
\end{equation}

Finally, we apply Condition \ref{condition:additive} (2) to the last term to obtain
\begin{equation}\label{eq:cond2int}
    \limsup_{X \to \infty}\frac{\sum_{\substack{g(p) > C \\ \mathbf{N}(p) \leq X } } (e^{\theta g(p)}-1)\mathbf{N}(p)^{-1}}{\sum_{\mathbf{N}(p) \leq X}  \mathbf{N}(p)^{-1}} \leq \limsup_{X \to \infty} \int_{y > C} (e^{\theta y}-1)\rho_X(dy) \leq \int_{y > C}e^{\theta y -1} \rho(dy).
\end{equation}
Notice $\int_{y>C}e^{\theta y -1} \rho(dy) \to 0$ as $C \to \infty$. Taken together, \eqref{eq:expcheby}, \eqref{eq:applymertens1}, and \eqref{eq:cond2int} show

       $$ \limsup_{C \to \infty} \limsup_{X \to \infty} \frac{1}{\log \log X} \log \mathbb{P}\Bigg(  \sum_{\substack{ p \in \mathcal{P} \\ g(p) > C \\
        \mathbf{N}p < X}} g(p)Z_p   \geq \varepsilon \log \log X\Bigg) = -\theta \varepsilon. $$
Since this holds for all $\theta > 0$, \eqref{eq:logprobtail1} follows. The proof of \eqref{eq:logprobtail2} is nearly identical. 
\end{proof}

Now, define $k_X := X^{\frac{1}{(\log \log X)^2}}$.

\begin{lemma}\label{lemma:MZ8}
    Let $A(X,C) := \{p \in \mathcal{P} : k_X \leq \mathbf{N}(p) \leq X, |g(p)| \leq C\}$.    For any $\epsilon > 0$, we have
    \begin{equation}
        \limsup_{X \to \infty} \frac{1}{\log \log X} \log \mathbb{P} \left[ \left| \sum_{p \in A(X,C)} g(p)Z_p \right| \geq \epsilon \log \log X \right] = -\infty.
    \end{equation}
\end{lemma}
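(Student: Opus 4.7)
The plan is to mirror the argument of Lemma \ref{lemma:MZ7} almost verbatim, with the one substantive change being which ``smallness'' input from Lemma \ref{lemma:Liu-Turan} we use. Since $g$ takes values in $\mathbb{N}$, the sum $\sum_{p \in A(X,C)} g(p) Z_p$ is already non-negative, so the absolute value in the probability is redundant and it suffices to control the upper tail $\mathbb{P}[\sum_{p \in A(X,C)} g(p) Z_p \geq \epsilon \log \log X]$.

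First I would apply the exponential Chebyshev inequality with a parameter $\theta > 0$, followed by the fourth statement of Lemma \ref{lemma:evZpYp-condition} to swap the dependent variables $Z_p$ for the independent Bernoullis $Y_p$ at the cost of an explicit multiplicative constant $M$. Using independence of the $\{Y_p\}$ together with $\log(1+x)\leq x$, the logarithm of the resulting moment generating function is bounded by
\[
\sum_{p \in A(X,C)} \log\!\left(1 + \frac{e^{\theta g(p)}-1}{\mathbf{N}(p)}\right) \;\leq\; \sum_{p \in A(X,C)} \frac{e^{\theta g(p)} - 1}{\mathbf{N}(p)} \;\leq\; (e^{\theta C} - 1) \sum_{k_X \leq \mathbf{N}(p) \leq X} \frac{1}{\mathbf{N}(p)},
\]
where the last inequality uses the defining constraint $|g(p)| \leq C$ on $A(X,C)$ to factor out a uniform bound on the numerator.

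The crux of the argument is then a short computation via Lemma \ref{lemma:Liu-Turan}. Since $\log k_X = \log X/(\log \log X)^2$, one has $\log \log k_X = \log \log X - 2\log \log \log X$, and therefore
\[
\sum_{k_X \leq \mathbf{N}(p) \leq X} \frac{1}{\mathbf{N}(p)} \;=\; 2 \log \log \log X + O(1),
\]
which is $o(\log \log X)$. Assembling the previous bounds and dividing by $\log \log X$ yields
\[
\limsup_{X \to \infty} \frac{1}{\log \log X} \log \mathbb{P}\!\left[\sum_{p \in A(X,C)} g(p) Z_p \geq \epsilon \log \log X\right] \;\leq\; -\theta \epsilon,
\]
and letting $\theta \to \infty$ finishes the proof.

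The only genuine obstacle is the bookkeeping around the choice $k_X = X^{1/(\log \log X)^2}$: one needs the restricted Mertens sum to be $o(\log \log X)$ so that the $(e^{\theta C}-1)$ factor can be absorbed, and this specific shape of $k_X$ is designed to guarantee exactly that, since it forces $\log \log X - \log \log k_X = 2 \log \log \log X$. No new ingredient beyond Lemmas \ref{lemma:Liu-Turan} and \ref{lemma:evZpYp-condition} is required; in particular, Condition \ref{condition:additive}(2) plays no role here because the uniform bound $|g(p)| \leq C$ already trivializes the dependence on $g$.
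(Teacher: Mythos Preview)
Your proof is correct and follows essentially the same approach as the paper's: exponential Chebyshev, swap $Z_p$ for $Y_p$ via Lemma \ref{lemma:evZpYp-condition}, use $\log(1+x)\leq x$ and $|g(p)|\leq C$ to reduce to the restricted Mertens sum $\sum_{k_X\leq \mathbf{N}(p)\leq X} 1/\mathbf{N}(p)=2\log\log\log X+o(1)$, and then let $\theta\to\infty$. The only cosmetic difference is that the paper bounds $|g(p)|\leq C$ before passing to the $Y_p$'s, whereas you do so afterwards; both orderings are fine.
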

\begin{proof}
    By Condition \ref{condition:Beurling} and the third statement of Lemma \ref{lemma:evZpYp-condition}, for any $\theta > 0$ and sufficiently large $X$ there exists an explicit constant $M > 0$ such that
    \begin{align*}
        \log \mathbb{E} \left[ \exp \left( \theta \left| \sum_{p \in A(X,C)} g(p)Z_p \right| \right) \right]
        &\leq \log \mathbb{E} \left[ \exp \left( \theta C \sum_{p \in A(X,C)} Y_p \right) \right] + \log M \\
        &= \sum_{p \in A(X,C)} \log \left( (\exp \left(\theta C \right) - 1) \cdot \frac{1}{\mathbf{N}(p)} + 1 \right) + \log M \\
        &\leq (\exp(\theta C) - 1) \cdot \sum_{k_X \leq \mathbf{N}(p) \leq X} \frac{1}{\mathbf{N}(p)} + \log M.
    \end{align*}
    By Lemma \ref{lemma:Liu-Turan}, we have
    \begin{equation*}
        \sum_{k_X \leq \mathbf{N}(p) \leq X} \frac{1}{\mathbf{N}(p)} = 2 \log \log \log X + O \left( \frac{(\log \log X)^2}{\log X} \right).
    \end{equation*}
    We then use exponential Chebyshev's inequality to conclude

    \begin{align*}
        & \hspace{14pt} \limsup_{X \to \infty} \frac{1}{\log \log X} \log \mathbb{P} \left[ \left| \sum_{p \in A(X,C)} g(p)Z_p \right| \geq \epsilon \log \log X \right] \\
        &\leq \limsup_{X \to \infty} \frac{1}{\log \log X} \log \mathbb{E} \left[ \exp \left( \theta \left| \sum_{p \in A(X,C)} g(p)Z_p \right| \right) \right] - \theta \epsilon \\
        &\leq -\theta \epsilon.
    \end{align*}
    Let $\theta$ grow arbitrarily large to conclude the lemma.
\end{proof}

\begin{lemma}\label{lemma:MZ9}
    Denote by $B(X,C) := \{p \in \mathcal{P} : \mathbf{N}(p) \leq k_X, |g(p)| \leq C \}$. Then for any $\theta \in \mathbb{R}$, we have
    \begin{equation}
        \lim_{X \to \infty} \left| \mathbb{E} \left[ \exp \left( \theta \sum_{p \in B(X,C)} g(p) Z_p \right) \right] - \mathbb{E} \left[ \exp \left( \theta \sum_{p \in B(X,C)} g(p) Y_p \right) \right] \right| = 0.
    \end{equation}
\end{lemma}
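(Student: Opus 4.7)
The plan is to expand both moment generating functions as sums over subsets of $B := B(X,C)$ using that $Z_p, Y_p \in \{0,1\}$. The pointwise identities $e^{\theta g(p) Z_p} = 1 + (e^{\theta g(p)}-1)Z_p$ and $e^{\theta g(p) Y_p} = 1 + (e^{\theta g(p)}-1)Y_p$ yield, after multiplying over $p \in B$ and taking expectations (with independence of the $Y_p$'s giving $\mathbb{E}[\prod_{p\in S} Y_p] = \prod_{p\in S} \mathbf{N}(p)^{-1}$),
\begin{align*}
    & \mathbb{E}\!\left[\exp\!\left(\theta \sum_{p \in B} g(p) Z_p\right)\right] - \mathbb{E}\!\left[\exp\!\left(\theta \sum_{p \in B} g(p) Y_p\right)\right] \\
    & \qquad = \sum_{S \subset B} \prod_{p \in S}(e^{\theta g(p)}-1)\left(\mathbb{E}\!\left[\prod_{p \in S}Z_p\right] - \prod_{p \in S}\frac{1}{\mathbf{N}(p)}\right).
\end{align*}
Each factor $|e^{\theta g(p)}-1|$ on $B$ is bounded by $T := e^{|\theta|C}$, so it suffices to show the right-hand side tends to zero in absolute value. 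I will truncate at $|S| \leq K$ with $K := \lceil A \log\log X \rceil$, where $A = A(\theta,C)$ is a fixed constant satisfying $A > eT$.

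For the main range $|S| \leq K$, since $\mathbf{N}(p) \leq k_X = X^{1/(\log\log X)^2}$ on $B$ and $K < (\log\log X)^2$ for large $X$, every such $S$ satisfies $\prod_{p\in S}\mathbf{N}(p) \leq k_X^K \leq X$, so Lemma \ref{lemma:evZpYp-condition}(1) applies and gives
\begin{equation*}
    \left|\mathbb{E}\!\left[\prod_{p \in S}Z_p\right] - \prod_{p \in S}\frac{1}{\mathbf{N}(p)}\right| \leq \frac{1}{aX} + O\!\left(\frac{X^{b-1}}{\prod_{p \in S}\mathbf{N}(p)^b}\right).
\end{equation*}
The $1/(aX)$-piece contributes at most $(1/(aX))\sum_{k \leq K}(T|B|)^k/k!$; Condition \ref{condition:Beurling}(2) gives $|B| = O(k_X/\log k_X)$, and Stirling's formula bounds this by $(1/X)(eT|B|/K)^K$ up to a polynomial factor, whose logarithm is $\log X(A/\log\log X - 1) + o(\log X) \to -\infty$. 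The $X^{b-1}$-piece is bounded analogously by $X^{b-1}(eTL_b/K)^K$ with $L_b := \sum_{p\in B}\mathbf{N}(p)^{-b} = O(k_X^{1-b}/\log k_X)$ (by Abel summation on Condition \ref{condition:Beurling}(2)), which simplifies to $X^{(b-1)(1 - A/\log\log X)} \to 0$ since $b < 1$.

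For the tail $|S| > K$, the crude bound from Lemma \ref{lemma:evZpYp-condition}(3) gives $|\mathbb{E}[\prod_{p\in S} Z_p] - \prod_{p\in S} \mathbf{N}(p)^{-1}| \leq (M+1)\prod_{p\in S} \mathbf{N}(p)^{-1}$, so the tail sum is at most $(M+1)\sum_{k > K}(TL)^k/k!$ with $L := \sum_{p\in B}\mathbf{N}(p)^{-1} = \log\log X + O(1)$ by Lemma \ref{lemma:Liu-Turan}. Because $A > eT$, the ratio $TL/K \to T/A < 1/e$, and a standard Poisson tail estimate bounds this by $O((eTL/K)^K) = O((\log X)^{-c})$ for some $c > 0$, which also vanishes.

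The main obstacle is that the sharp main-part estimate requires $K < (\log\log X)^2$ to ensure $\prod_{p\in S}\mathbf{N}(p) \leq X$, whereas the Poisson tail estimate requires $K$ to exceed $eTL \sim eT\log\log X$. Choosing $K = \lceil A\log\log X\rceil$ with $A$ a fixed constant greater than $eT$ reconciles both requirements, since $A < \log\log X$ holds automatically for large $X$. Combining the main-part and tail bounds then yields the lemma.
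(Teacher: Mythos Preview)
Your argument is correct and follows the same overall architecture as the paper's proof: split the contribution into a ``small'' and ``large'' range, use the pointwise comparison from Lemma~\ref{lemma:evZpYp-condition}(1)--(2) in the small range, and use the crude domination from Lemma~\ref{lemma:evZpYp-condition}(3) together with a Poisson-type tail bound in the large range, with the threshold taken of size $\asymp \log\log X$.

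The difference lies in the choice of expansion. The paper Taylor-expands $\exp$ and works with $r$-th moments $\mathbb{E}\bigl[(\sum_{p\in B} g(p)Z_p)^r\bigr]$, then opens each moment multinomially and uses $Z_p^{r_i}=Z_p$ to reduce to products of distinct $Z_p$'s. You instead use the Bernoulli identity $e^{\theta g(p)Z_p}=1+(e^{\theta g(p)}-1)Z_p$ and expand directly as a sum over subsets $S\subset B$, which immediately produces only products of distinct $Z_p$'s. This is a cleaner route: it avoids the multinomial bookkeeping entirely and makes the role of the $\prod_{p\in S}\mathbf{N}(p)^{-1}$ weights more transparent. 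In the small range you also keep the finer error term $X^{b-1}/\prod_{p\in S}\mathbf{N}(p)^b$ rather than the cruder $X^{b-1}$ used in the paper; this is not needed for the conclusion but costs nothing. Two minor remarks: the condition $\prod_{p\in S}\mathbf{N}(p)\le X$ is not strictly required to invoke Lemma~\ref{lemma:evZpYp-condition}(1), since $\bigl|\tfrac{1}{aX}\lfloor aX/\prod\rfloor-1/\prod\bigr|\le 1/(aX)$ holds unconditionally; and $L=\sum_{p\in B}\mathbf{N}(p)^{-1}$ equals $\log\log k_X+O(1)=\log\log X-2\log\log\log X+O(1)$ rather than $\log\log X+O(1)$, though only the asymptotic $L\sim\log\log X$ is used.
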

\begin{proof}
    Let $K$ be any constant. Consider the following three expressions:
    \begin{equation} \label{eq:lemma-1}
        \sum_{r \leq K \log \log X} \frac{|\theta|^r}{r!} \left| \mathbb{E} \left[ \left( \sum_{p \in B(X,C)} g(p) Z_p \right)^r \right] - \mathbb{E} \left[ \left( \sum_{p \in B(X,C)} g(p) Y_p \right)^r \right] \right|,
    \end{equation}
    \begin{equation} \label{eq:lemma-2}
        \sum_{r > K \log \log X} \frac{|\theta|^r}{r!} \mathbb{E} \left[ \left( \sum_{p \in B(X,C)} g(p) Z_p \right)^r \right],
    \end{equation}
    \begin{equation} \label{eq:lemma-3}
        \sum_{r > K \log \log X} \frac{|\theta|^r}{r!} \mathbb{E} \left[ \left( \sum_{p \in B(X,C)} g(p) Y_p \right)^r \right].
    \end{equation}
    Then by Taylor expansion we have
    \begin{align*}
        & \hspace{14pt} \left| \mathbb{E} \left[ \exp \left( \theta \sum_{p \in B(X,C)} g(p) Z_p \right) \right] - \mathbb{E} \left[ \exp \left( \theta \sum_{p \in B(X,C)} g(p) Y_p \right) \right] \right| \leq \eqref{eq:lemma-1} + \eqref{eq:lemma-2} + \eqref{eq:lemma-3}.
    \end{align*}
    By the second statement of Lemma \ref{lemma:evZpYp-condition}, for any tuples of non-negative integers $r_1, \cdots, r_k$ and sufficiently large $X$ there exists an absolute constant $B > 0$ such that
    \begin{align*}
        \left| \mathbb{E} \left[ Z_{p_1}^{r_1} Z_{p_2}^{r_2} \cdots Z_{p_k}^{r_k} \right] - \mathbb{E} \left[ Y_{p_1}^{r_1} Y_{p_2}^{r_2} \cdots Y_{p_k}^{r_k} \right] \right| = \left| \mathbb{E} \left[ Z_{p_1} Z_{p_2} \cdots Z_{p_k} \right] - \mathbb{E} \left[ Y_{p_1} Y_{p_2} \cdots Y_{p_k} \right] \right| \leq B X^{b-1}.
    \end{align*}
    Therefore, we have
    \begin{align*}
        & \hspace{14pt} \left| \mathbb{E} \left[ \left( \sum_{p \in B(X,C)} g(p) Z_p \right)^r \right] - \mathbb{E} \left[ \left( \sum_{p \in B(X,C)} g(p) Y_p \right)^r \right] \right| \\
        &\leq \left| \sum_{k=1}^r \sum_{r_i} \frac{r!}{r_1! r_2! \cdots r_k!} \frac{1}{k!} \sum_{p_j} g(p_1)^{r_1} \cdots g(p_k)^{r_k} \cdot \left(\mathbb{E} \left[ Z_{p_1}^{r_1} Z_{p_2}^{r_2} \cdots Z_{p_k}^{r_k} \right] - \mathbb{E} \left[ Y_{p_1}^{r_1} Y_{p_2}^{r_2} \cdots Y_{p_k}^{r_k} \right] \right) \right| \\
        &\leq \sum_{k=1}^r \sum_{r_i} \frac{r!}{r_1! r_2! \cdots r_k!} \frac{1}{k!} \sum_{p_j} \frac{B \cdot C^{r_1 + \cdots + r_k}}{X^{1-b}} \leq \frac{B}{X^{1-b}} \left( \sum_{p \in B(X,C)} C \right)^r \leq \frac{B \cdot (C \cdot k_X)^r}{X^{1-b}},
    \end{align*}
    where the $r_i$'s range over tuples of integers $(r_1, \cdots, r_k)$ such that $r_1 + \cdots + r_k = r$, and the $p_j$'s ranges over prime elements $p_1, \cdots, p_k \in B(X,C)$. By Stirling's approximation for $r!$, there exists an absolute constant $B' > 0$ such that
    \begin{align*}
        \eqref{eq:lemma-1} &\leq \sum_{r \leq K \log \log X} \frac{|\theta|^r}{r!} \frac{B \cdot (C \cdot k_X)^r}{X^{1-b}} \\
        &\leq B' \cdot \exp \left(- (1-b) \log X \right) \cdot K \log \log X \cdot \left[\max_{r \leq K \log \log X} \left( \frac{e |\theta| C k_X}{r} \right)^r \right] \\
        &\leq B' \cdot \exp \left(- (1-b) \log X \right) \cdot K \log \log X \cdot \left( \frac{e |\theta| C k_X}{K \log \log X} \right)^{K \log \log X} \\
        &\leq B' K \cdot \exp \left(- (1-b) \log X + \log \log \log X + K (\log |\theta| + \log C) \log \log X + \frac{\log X}{\log \log X} \right).
    \end{align*}
    Hence, there exists an explicit constant $B'' > 0$ such that for sufficiently large $X$,
    \begin{equation*}
        \eqref{eq:lemma-1} \leq B'' e^{-\frac{1}{2}(1-b) \log X}.
    \end{equation*}
    Note that the third statement of Lemma \ref{lemma:evZpYp-condition} implies that for sufficiently large $X$ there exists an explicit constant $M > 0$ such that
    \begin{equation*}
        \eqref{eq:lemma-2} \leq M \cdot \eqref{eq:lemma-3},
    \end{equation*}
    so it suffices to consider \eqref{eq:lemma-3} for sufficiently large $X$. We have
    \begin{align*}
        \eqref{eq:lemma-3} &\leq \sum_{r > K \log \log X} \frac{|\theta|^r}{r!} \cdot C^r \cdot \left( \left( \sum_{\mathbf{N}p \leq k_X} \frac{1}{\mathbf{N}p} \right) + \left( \sum_{\mathbf{N}p \leq k_X} \frac{1}{\mathbf{N}p} \right)^2 + \cdots + \left( \sum_{\mathbf{N}p \leq k_X} \frac{1}{\mathbf{N}p} \right)^r \right) \\
        &\leq \sum_{r > K \log \log X} \frac{|\theta|^r}{r!} \cdot r \cdot C^r \cdot \left( \sum_{\mathbf{N}p \leq k_X} \frac{1}{\mathbf{N}p} \right)^r.
    \end{align*}
    By Lemma \ref{lemma:Liu-Turan} and Stirling's approximation for $r!$, we have
    \begin{align*}
        \eqref{eq:lemma-3} &\leq \sum_{r > K \log \log X} \frac{|\theta|^r}{r!} \cdot r \cdot C^r \cdot (\log \log k_X)^r \\
        &\leq \sum_{r > K \log \log X} \exp \left( r \log |\theta| + r \log C + r + \log r - r \log r + r \log \log \log k_X \right).
    \end{align*}
    Because $r > K \log \log X$ inside the summation, we have $$\log r > \log K + \log \log \log X > \log K + \log \log \log k_X.$$ Hence, there exist explicit constants $B_*, B_{**} > 0$ such that for sufficiently large $K$, we have
    \begin{align*}
        \eqref{eq:lemma-3} &\leq \sum_{r > K \log \log X} B_* \exp \left(r \log |\theta| + r \log C + r + \log r - r \log K  \right) \\
        &\leq \sum_{r > K \log \log X} B_* \exp \left( - \frac{1}{2} r \log K \right) \leq B_{**} \exp \left( - \frac{1}{2} \log K \log \log X \right).
    \end{align*}
    Combining all the upper bounds for \eqref{eq:lemma-1}, \eqref{eq:lemma-2}, and \eqref{eq:lemma-3}, and choosing $K > 1$, we have
    \begin{align*}
        0 &\leq \lim_{X \to \infty} \left| \mathbb{E} \left[ \exp \left( \theta \sum_{p \in B(X,C)} g(p) Z_p \right) \right] - \mathbb{E} \left[ \exp \left( \theta \sum_{p \in B(X,C)} g(p) Y_p \right) \right] \right| \\
        &\leq \lim_{X \to \infty} B'' \cdot e^{-\frac{1}{2}(1-b)\log X} + (M+1) \cdot B_{**} \cdot e^{-\frac{1}{2} \log K \log \log X} = 0.
    \end{align*}
\end{proof}
\begin{remark}
    Using Lemma \ref{lemma:MZ9}, one can in fact show that
    \begin{equation*}
        \limsup_{X \to \infty} \frac{1}{\log \log X} \log \left| \mathbb{E} \left[ \exp \left( \theta \sum_{p \in B(X,C)} g(p) Z_p \right) \right] - \mathbb{E} \left[ \exp \left( \theta \sum_{p \in B(X,C)} g(p) Y_p \right) \right] \right| = -\infty.
    \end{equation*}
\end{remark}

\section{Proof}
We are now ready to prove Theorem \ref{thm:Main-MonoidMZ}. Take 
$$B(X, C) = \{p \in \mathcal{P} : |g(p)|<C, \mathbf{N}(p) \leq k_X\}.$$
For any real number $\theta$, 
\begin{align}\label{eq:logEexp}
\begin{split}
    \log \mathbb{E}\left[\exp\left(\theta \sum_{p \in B(X,C)}g(p)Y_p\right) \right] &= \log \prod_{p\in B(X,C)} \mathbb{E}\left[ \exp(\theta g(p)Y_P) \right] \\
    &= \sum_{p \in B(X,C)} \log\left(\frac{1}{\mathbf{N}(p)}e^{\theta g(p)} + 1 - \frac{1}{\mathbf{N}(p)}\right).
\end{split}
\end{align}
For $p$ with sufficiently large $\mathbf{N}(p)$, the Taylor expansion for $\log(1+x)$ shows  
\begin{equation}\label{eq:taylorx+1}
    \log\left(\frac{1}{\mathbf{N}(p)}e^{\theta g(p)} + 1 - \frac{1}{\mathbf{N}(p)}\right) = \frac{1}{\mathbf{N}(p)}e^{\theta g(p)}  - \frac{1}{\mathbf{N}(p)} + O_{C,\theta}\left(\frac{1}{\mathbf{N}(p)^2}\right).
\end{equation}
By Lemma \ref{lemma:Liu-Turan}, 
\begin{equation}\label{eq:sumoverloglog}
    \frac{1}{\log \log X} \sum_{\mathbf{N}(p)\leq X} \frac{1}{\mathbf{N}(p)}=1 \quad \text{as} \quad X \to \infty.
\end{equation}
Further, 
\begin{equation}\label{eq:knratio}
    \lim_{X \to \infty} \frac{\log \log k_X}{\log \log X} =1.
\end{equation}
Using the definition of $\rho_X$, Condition \ref{condition:additive}(2) and equations \eqref{eq:logEexp}, \eqref{eq:taylorx+1}, \eqref{eq:sumoverloglog}, and \eqref{eq:knratio} show 
\begin{equation}\label{eq:Eexpsum}
    \lim_{X \to \infty} \frac{1}{\log \log X} \mathbb{E}\left[\exp (\theta \sum_{p \in B(X,c)} g(p)Y_p)\right] = \int_{-C}^C (e^{\theta y}-1) \rho(dy),
\end{equation}
if $\rho\{C\}=0$ and $\rho\{-C\}$. This latter condition is not an obstacle, since the measure $\rho$ on $\mathbb{R}$ has only countably many atoms. So we can choose infinite, increasing sequences of $C$ for which both $\rho\{C\}=0$ and $\rho\{-C\}=0$. 

Lemma \ref{lemma:MZ9} and \eqref{eq:Eexpsum} imply 
\begin{equation*}
      \lim_{X \to \infty} \frac{1}{\log \log X} \mathbb{E}\left[\exp (\theta \sum_{p \in B(X,c)} g(p)Z_p)\right] = \int_{-C}^C (e^{\theta y}-1) \rho(dy).
\end{equation*}

By the G{\"a}rtner-Ellis Theorem (Theorem \ref{theorem:Gartner-Ellis}),
$$\mathbb{P}\left( \frac{\sum_{p \in B(X,c)} g(p)Z_p}{\log \log X} \in A\right)$$
satisfies a large deviation principle with rate function 
$$I_C(x) = \sup_{\theta \in \mathbb{R}} \left\{ \theta x - \int_{-C}^C (e^{\theta y}-1)\rho(dy)\right\}.$$
Taking $C \to \infty$, and using Lemmas \ref{lemma:MZ7} and \ref{lemma:MZ8} to handle the tail error estimates on $I_C(X)$, we have that 
$$\mathbb{P}\left( \frac{W(X)}{\log \log X} \in A\right)$$
satisfies a large deviation principle with rate function 
$$I(x) = \lim_{C \to \infty}I_C(x) = \sup_{\theta \in \mathbb{R}} \left\{ \theta x - \int_{-\infty}^\infty (e^{\theta y}-1)\rho(dy)\right\},$$
as desired.

\section{Acknowledgments}
This work was completed while the first author visited the second at the Max Planck Institute for Mathematics in Bonn; we thank MPIM for its generous support and for providing excellent working conditions.  We also thank Sachin Kumar for pointing out a typo in the main theorem. 

\nocite{*}
\bibliographystyle{alpha}
\bibliography{arxivbib.bib}
\end{document}